\documentclass[11pt,a4paper]{article}
\usepackage{amsmath}
\usepackage{amssymb}
\usepackage{amsthm}
\usepackage{amsfonts}
\usepackage{enumerate}
\usepackage{verbatim}
\usepackage{hyperref}
\usepackage{breakurl,url}
\usepackage{bm}
\usepackage{color}
\usepackage{mathtools} 
\usepackage{pstricks}
\usepackage{pst-node}
\usepackage{pst-plot}
\usepackage{algorithm}
\usepackage{algorithmic}
\usepackage{pst-poly}
\usepackage{caption,subcaption} 
\usepackage[margin=1.3in]{geometry}
\usepackage{mathtools}
\usepackage{caption,subcaption} 
\usepackage{multirow}           
\usepackage[margin=1.3in]{geometry}
\newtheorem{theorem}{Theorem}

\newtheorem{lemma}{Lemma}

\theoremstyle{definition}

\newtheorem{example}{Example}
\newtheorem{remark}{Remark}

\begin{document}
	\title{Iterative Methods for Solving New Generalized Absolute Value Equations}

	\author{Shubham Kumar$^{a,1}$ and Joydip Dhar$^{a,2}$\\
		\emph{\small $^{a}$Department of Engineering Sciences,}
		\emph{\small Atal Bihari Vajpayee - Indian Institute of Information Technology}\\
		\emph{\small and Management Gwalior, Madhya Pradesh, 474015, India}\\
		\emph{\small $^1$Email: shub.srma@gmail.com, $^2$Email: jdhar@iiitm.ac.in}}
	\date{}
	\maketitle

	
	\begin{abstract} 
		
		
		In this paper, we propose two iterative methods, the modified Picard method (MPM) and the generalized Traub method (GTM), to solve a new generalized absolute value equation (NGAVE). We establish their convergence under appropriate assumptions. Additionally, numerical examples demonstrate the effectiveness and applicability of the developed algorithms.
		
		\noindent \textbf{Keywords.} New generalized absolute value equation, Modified Picard's method, Generalized Traub's method, Convergence.
		
		\noindent \textbf{2020 MSC.}  65F10, 90C05, 90C30.
		
	\end{abstract}
	

	
	\section{Introduction}
	In this study, we investigate numerical methods for solving the following NGAVE
	\begin{equation} \label{Equ1}
		Ax -\vert Bx \vert =d,
	\end{equation}
	where $A,B \in \mathbb{R}^{n\times n},$ $d \in \mathbb{R}^{n}$ are given and $x \in \mathbb{R}^{n}$ is the unknown vector to be determined. 
	The notation $|\cdot|$ denotes the componentwise absolute value operator; that is, $(|Bx|)$ is the vector whose entries are the absolute values of the corresponding components of Bx.
	
	\noindent A special case of the NGAVE (\ref{Equ1}) is the absolute value equation (AVE), obtained by taking $B=I$ (identity matrix):
	\begin{equation} \label{Equ2}
		Ax -\vert x \vert =d.
	\end{equation}
	
	
	The concept of AVE was introduced by Rohn \cite{JiriRohn2004}, who established an alternative theorem concerning their unique solvability. Subsequent studies further developed the theory of unique solvability of AVE in \cite{Mangasarian2006,Mangasarian2007}.The AVE arise in various applications, including engineering mathematics, optimization, linear complementarity problems, and game theory. 
	For comprehensive discussions of various types of AVE, their solvability properties, and applications, we refer the reader to the recent survey articles \cite{Hladik2025Survey,ShubhamOpt.Lett} and the PhD thesis \cite{KumarPhDThesis2024}.
	
	The NGAVE (\ref{Equ1}) generalizes the well-known AVE (\ref{Equ2}). Wu \cite{ShiLiangWu2021NGAVE} first introduced the NGAVE and investigated necessary and sufficient conditions for its unique solvability. Furthermore, the author reformulated the NGAVE as an equivalent vertical linear complementarity problem (VLCP). By exploiting various matrix properties, including the \(P\)-matrix property, singular values, spectral radius, and the row-\(W\) property, Wu \cite{ShiLiangWu2021NGAVE} established necessary and sufficient conditions for the unique solvability of the NGAVE.
	Motivated by Wu's work \cite{ShiLiangWu2021NGAVE}, Kumar et al. \cite{Kumar2022YJOR} introduced a new class of AVE of the form \(Ax-|Bx-c|=d\) and established conditions for their unique solvability.
	For the numerical solution of the NGAVE, Yang et al. \cite{ShiLiangWuNGAVE_SOR2023} proposed an SOR-like iterative method. Subsequently, Khaldi et al. \cite{KhaldiNGAVE2024} developed a Picard iteration method and a generalized Newton method for solving the NGAVE, establishing the global linear convergence of both methods.
	
	
	To the best of our knowledge, only a limited number of numerical methods have been developed for solving the NGAVE.
	Therefore, this paper aims to introduce two iterative methods for its solution. Under suitable conditions, we establish that both methods are well-defined and converge to the unique solution of the NGAVE. Additionally, numerical experiments are conducted to evaluate the performance of the proposed methods and to compare their results with those of existing numerical schemes.
	We also solved the hydrodynamic equilibrium problem with our proposed methods.

	
	
	The paper is structured as follows. Section 2 introduces the basic notation and preliminary results needed for the subsequent analysis. In Section 3, we introduce two iterative schemes and establish their convergence properties. 
	Section 4 presents the numerical experiments and evaluates the proposed methods by comparing their performance with existing approaches. Finally, Section 5 concludes the discussion.
	
	\noindent The existing literature on the numerical methods and unique solvability for the NGAVE is summarized in Table~\ref{tab:comparison50}.

	\begin{table}[htbp]
		\centering
		\caption{Summary of the Numerical Methods and Unique Solvability Conditions for the NGAVE}
		\label{tab:comparison50}
		\small
		\begin{tabular}{p{3.5cm}c p{4cm} p{5cm}}
			\hline
			Author(s) & Year & Work & Results \\
			\hline
			Wu \cite{ShiLiangWu2021NGAVE}
			& 2021
			& Necessary and Sufficient Unique Solvability Conditions
			& Based on P matrix, singular values, row-W property and spectral radius  \\
			
			Yang and Wu \cite{ShiLiangWuNGAVE_SOR2023}
			& 2023
			& Iterative Method
			& SOR-like Method \\
			
			Khaldi and Achache \cite{KhaldiNGAVE2024}
			& 2024
			&  Sufficient Conditions and Iterative Methods
			&  Based on Positive definite matrix and singular values, Picard's Method and Generalized Newton Method \\
			
			\textbf{Proposed Work}
			& 2026
			& Iterative Methods
			& Modified Traub's Method and Modified Picard's Method \\
			\hline
		\end{tabular}
	\end{table}
	
	
	
	
	\section{Preliminaries} \label{section_Preliminaries}
	This section introduces the notation used throughout the paper and presents the preliminary definitions and results needed for the later discussion.
	We utilize the following symbols:
	\begin{itemize}
		\item $I$ denotes the identity matrix.
		
		\item $\sigma_{\max}$ and $\sigma_{\min}$ denote the maximum and minimum singular values, respectively.
		
		\item $\rho(A)$ denotes the spectral radius of the matrix $A$.
		
		\item $\|\cdot\|$ denotes the Euclidean (or 2-) norm.
		
		
		\item For a vector \(x\), \(\operatorname{diag}(x)\) denotes the diagonal matrix with the components of \(x\) as its diagonal entries.
		
		
		\item $\operatorname{sign}(x)$ denotes the sign vector of $x$, whose $k$th component is $-1$, $0$, or $1$ if $x_k<0$, $x_k=0$, or $x_k>0$, respectively.
		
		\item $D$ is diagonal matrix whose diagonal entries comes from the set \{0, 1,-1\}. Further, $D(x)x=|x|$, $D(Bx_{k})=diag(sign(Bx_k))$ and $D(Bx_{k})Bx_{k}=|Bx_{k}|.$
	\end{itemize}

	\begin{theorem} \cite{KhaldiNGAVE2024,ShiLiangWu2021NGAVE} \label{Result 1}
		If any one of the following conditions holds, then the NGAVE (\ref{Equ1}) admits a unique solution for every $d$.\\
		(i) $\sigma_{max}(B)$ $<$ $\sigma_{min}(A)$;  \\
		(ii) $\sigma_{min}(AB^{-1}) > 1$ or $\sigma_{max}(BA^{-1})<1$;\\
		(iii) $||A^{-1}||||B||<1$;\\
		(iv) $\rho(|BA^{-1}|)<1.$
	\end{theorem}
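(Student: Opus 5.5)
The plan is to reduce every case to a fixed-point problem for a map that is a contraction in some norm. Banach's fixed-point theorem then gives existence and uniqueness of a solution of (\ref{Equ1}) for every $d$ at once. The elementary tool throughout is the componentwise inequality $\bigl||u|-|v|\bigr|\le |u-v|$ for $u,v\in\mathbb{R}^n$. It gives $\| |u|-|v| \|\le \|u-v\|$ in the $2$-norm, and it is also valid entrywise, which matters for case (iv).

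\emph{Cases (i) and (iii).} Under (iii), $A$ is implicitly invertible. Then (\ref{Equ1}) is equivalent to $x=T(x):=A^{-1}(|Bx|+d)$. The basic inequality gives
\[
\|T(x)-T(y)\|\le \|A^{-1}\|\,\| |Bx|-|By| \|\le \|A^{-1}\|\,\|B\|\,\|x-y\|,
\]
so $T$ is a contraction on $\mathbb{R}^n$ and has exactly one fixed point. For (i), the hypothesis $\sigma_{\min}(A)>\sigma_{\max}(B)\ge 0$ forces $A$ to be nonsingular. Moreover $\|A^{-1}\|=1/\sigma_{\min}(A)$ and $\|B\|=\sigma_{\max}(B)$ in the $2$-norm, so (i) is exactly (iii).

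\emph{Case (ii).} Here I would change variables to $y=Ax$, with $A$ invertible as implicitly assumed wherever $A^{-1}$ or $AB^{-1}$ appears. Writing $C:=BA^{-1}$, the equation becomes $y=|Cy|+d$. The map $S(y)=|Cy|+d$ satisfies $\|S(y)-S(z)\|\le\|C\|\,\|y-z\|$, and $\|C\|=\sigma_{\max}(BA^{-1})<1$. So $S$ has a unique fixed point, and $x=A^{-1}y$ is then the unique solution. For the other form of (ii), $\sigma_{\min}(AB^{-1})>1$ requires $B$ invertible and makes $AB^{-1}$ invertible, hence $A$ as well. Using $\sigma_{\max}(M^{-1})=1/\sigma_{\min}(M)$ we get $\sigma_{\max}(BA^{-1})=1/\sigma_{\min}(AB^{-1})<1$, which is the previous case.

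\emph{Case (iv): the main obstacle.} With the same $C=BA^{-1}$ and $S$, the entrywise inequality gives $|S(y)-S(z)|\le |C|\,|y-z|$ componentwise. The difficulty is that $\rho(|C|)<1$ does not bound $\|\,|C|\,\|_2$, so I need a monotone norm adapted to $|C|$. I would first pick $\varepsilon>0$ with $\rho(|C|+\varepsilon J)<1$, where $J$ is the all-ones matrix; this is possible by continuity of the spectral radius. Perron--Frobenius then gives a positive vector $w$ with $(|C|+\varepsilon J)w\le \theta w$ for some $\theta<1$. In the weighted max-norm $\|v\|_w=\max_i |v_i|/w_i$, any $|v|\le |C||u|$ satisfies $\|v\|_w\le\theta\|u\|_w$, so $S$ is a contraction there. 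Alternatively, one can avoid norms altogether: iterate $y_{k+1}=S(y_k)$, note $|y_{k+1}-y_k|\le |C|^k|y_1-y_0|$ with $\sum_k |C|^k$ convergent, and get a Cauchy sequence. Uniqueness follows because two fixed points satisfy $|y-z|\le |C|^k|y-z|\to 0$. Either way, transforming back by $x=A^{-1}y$ finishes the proof.
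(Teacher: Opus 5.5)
Your proof is correct. Cases (i) and (iii) reduce to the contraction $T(x)=A^{-1}(|Bx|+d)$. Cases (ii) and (iv) reduce, after $y=Ax$, to $S(y)=|BA^{-1}y|+d$. Your weighted max-norm (or Neumann-series) argument correctly handles the fact that $\rho(|BA^{-1}|)<1$ gives no bound on $\|BA^{-1}\|_2$.

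The paper gives no proof of this statement; it quotes it from \cite{KhaldiNGAVE2024,ShiLiangWu2021NGAVE}. As the introduction describes, there the conditions come out of a characterization of unique solvability: Wu's reformulation as a vertical LCP with the row-$W$ property, together with nonsingularity of all $A-DB$ (cf.\ Lemma~2.1 of \cite{KhaldiNGAVE2024}, used in Lemma~\ref{Lemma 01}). In that route, uniqueness is immediate: two solutions $x\neq z$ give $(A-DB)(x-z)=0$ for some diagonal $D$ with $|D|\le I$. Each listed condition then becomes a nonsingularity check on $A-DB=(I-DBA^{-1})A$, for instance via $\|DBA^{-1}\|\le\|BA^{-1}\|<1$ or $\rho(DBA^{-1})\le\rho(|BA^{-1}|)<1$. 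Existence rests on the heavier complementarity/characterization theory.

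Your route is more elementary and self-contained (Banach plus Perron--Frobenius) and gets existence and uniqueness in one stroke. It is also constructive. Since $S(Ax)=A\,T(x)$, it shows in all four cases that the Picard iteration $x_{k+1}=A^{-1}(|Bx_k|+d)$ of \cite{KhaldiNGAVE2024} converges globally from any start. Your componentwise argument for (iv) is the same mechanism the paper relies on in Theorem~\ref{Theorem 1}. What the characterization route buys instead is necessary-and-sufficient criteria and conditions not of contraction type, such as $P$-matrix or row-$W$ conditions, which a fixed-point argument like yours cannot reach.
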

	
	\begin{theorem} \cite{Mangasarian2009} \label{Result 2}
		The inequality $\vert \vert |x|-|y| || \leq 2||x-y||$ holds for each $x,y \in \mathbb{R}^{n}$.
	\end{theorem}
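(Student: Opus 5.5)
The inequality in Theorem~\ref{Result 2} is far from sharp, since in fact $\||x|-|y|\|\le\|x-y\|$ holds. I would prove this stronger bound first and then simply weaken the constant to $2$. Because the absolute value here is componentwise, everything reduces to a scalar estimate applied coordinate by coordinate, followed by summing squares in the Euclidean norm.

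\emph{Step 1 (scalar estimate).} For real numbers $a,b$ I will show $\bigl||a|-|b|\bigr|\le |a-b|$. This is the reverse triangle inequality. From $|a|=|(a-b)+b|\le |a-b|+|b|$ we get $|a|-|b|\le|a-b|$. Exchanging the roles of $a$ and $b$ gives $|b|-|a|\le|a-b|$. Together these give the claim.

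\emph{Step 2 (componentwise to norm).} Fix $x,y\in\mathbb{R}^n$ and apply Step 1 with $a=x_k$, $b=y_k$ for each $k=1,\dots,n$. Squaring and summing gives
\begin{equation*}
\bigl\||x|-|y|\bigr\|^2=\sum_{k=1}^n \bigl(|x_k|-|y_k|\bigr)^2\le \sum_{k=1}^n (x_k-y_k)^2=\|x-y\|^2 .
\end{equation*}
Taking square roots yields $\||x|-|y|\|\le\|x-y\|$. Since $\|x-y\|\ge 0$, this is at most $2\|x-y\|$, which is the stated inequality.

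There is essentially no obstacle. The only point to watch is that $|\cdot|$ acts componentwise, so a coordinatewise bound passes directly to the Euclidean norm; the same monotonicity argument would also work for any monotone norm, such as the $1$- or $\infty$-norm. If the authors want to use the sharper constant later, for example to weaken the convergence conditions for MPM or GTM, the constant $1$ from Step 2 is already available.
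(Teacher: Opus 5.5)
Your proof is correct and complete. The paper does not prove this statement; it quotes it from \cite{Mangasarian2009} with the constant $2$, so there is no in-text argument to compare against.

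Your route is to apply the scalar reverse triangle inequality $\bigl||a|-|b|\bigr|\le|a-b|$ coordinatewise and then sum squares, which works because the Euclidean norm is monotone. This is the most elementary argument, and it gives the sharp constant $1$. Equality holds, for instance, whenever $x,y\ge 0$, so the factor $2$ in the quoted statement is pure slack.

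The paper already uses your Step~1 in vector form, $\bigl||Bx_i|-|Bx^*|\bigr|\le|B(x_i-x^*)|$, in the proof of Theorem~\ref{Theorem 1}. In the proof of Theorem~\ref{Main Thm 01}, however, it invokes the constant-$2$ norm bound. That is exactly where the factor $3=2+1$ in \eqref{Proof 02} and \eqref{Proof 09} comes from. With your constant these factors drop to $2$. The hypothesis of Theorem~\ref{Main Thm 01} could then be relaxed to $\|(A-DB)^{-1}\|<1/(2\|B\|)$. The same Neumann-series argument in Theorem~\ref{Main Thm 02} would then only require $\|A^{-1}\|<1/(3\|B\|)$.
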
	
	
	\begin{theorem} \cite{Ortega1970} \label{Result 3}
		\textbf{Neumann Lemma.} If $I-B$ is invertible whenever $||B||<1,$ then $||(I-B)^{-1}|| \leq \frac{1}{1-||B||}.$
	\end{theorem}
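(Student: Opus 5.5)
The statement to prove is Theorem~\ref{Result 3} (the Neumann Lemma), read in its standard form: if $\|B\|<1$ for a submultiplicative norm (here the spectral norm), then $I-B$ is invertible and $\|(I-B)^{-1}\|\le \frac{1}{1-\|B\|}$. I will prove invertibility first and then the bound. Both parts come from the elementary inequality $\|(I-B)x\|\ge \|x\|-\|Bx\|\ge (1-\|B\|)\|x\|$.

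\textbf{Invertibility.} Suppose $(I-B)x=0$. The inequality above gives $0\ge (1-\|B\|)\|x\|$. Since $1-\|B\|>0$, this forces $x=0$. Hence $I-B$ has trivial kernel, and being square it is invertible.

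\textbf{Norm bound.} Take any $y$ and set $x=(I-B)^{-1}y$. The same inequality gives $\|y\|\ge (1-\|B\|)\|(I-B)^{-1}y\|$. Taking the supremum over $\|y\|=1$ yields $\|(I-B)^{-1}\|\le 1/(1-\|B\|)$.

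\textbf{Alternative via the Neumann series.} One can instead use $S_m=\sum_{k=0}^m B^k$. From $\|B^k\|\le\|B\|^k$ the partial sums are Cauchy, so $S_m$ converges to some $S$. The telescoping identity $(I-B)S_m=I-B^{m+1}\to I$ shows $S=(I-B)^{-1}$. The geometric series then gives $\|S\|\le\sum_k\|B\|^k=1/(1-\|B\|)$.

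No step is a real obstacle. The only point needing care is submultiplicativity, which the operator norm induced by $\|\cdot\|_2$ has. The first route avoids this issue entirely.
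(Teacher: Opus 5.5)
Your proof is correct, and you were right to read the statement in its standard form: hypothesis $\|B\|<1$, conclusions that $I-B$ is invertible and that the norm bound holds. The paper's wording makes invertibility sound like a hypothesis, but its use in Theorem~\ref{Main Thm 02} for $I-A^{-1}DB$ needs it as a conclusion.

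The paper gives no proof of its own. It quotes the result from \cite{Ortega1970}, where it is the classical Neumann lemma; the usual textbook proof of that lemma is the Neumann series, which is your alternative route.

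Your primary argument, through the lower bound $\|(I-B)x\|\ge(1-\|B\|)\|x\|$, is genuinely different and more elementary. It needs neither completeness nor submultiplicativity. It does rely on two other things:
\begin{itemize}
\item Finite dimensionality. In an infinite-dimensional Banach space, being bounded below gives only injectivity and closed range, not surjectivity.
\item The matrix norm being the one induced by the vector norm. You use this both for $\|Bx\|\le\|B\|\|x\|$ and when you take the supremum over $\|y\|=1$.
\end{itemize}
So this route trades submultiplicativity for consistency of the matrix and vector norms; it does not avoid norm issues entirely, as your last sentence suggests.

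The series route costs completeness. In return it gives the explicit formula $(I-B)^{-1}=\sum_{k\ge 0}B^k$, and it carries over verbatim to bounded operators on Banach spaces.

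Your estimate $\|S\|\le\sum_k\|B\|^k$ also silently uses $\|I\|=1$, so submultiplicativity is not the only point needing care. This holds for induced norms such as the spectral norm used in the paper. It fails for the (submultiplicative) Frobenius norm, for which the lemma is false already at $B=0$ when $n\ge 2$.

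In the paper's setting (spectral norm on $\mathbb{R}^{n\times n}$), both of your arguments are complete.
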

	\section{Proposed Methods}
	In this section, we present the proposed methods, which are divided into two parts. 
	In the first part, we will cover the generalized Traub's method (GTM) and its convergence analysis. In the second part, we will examine the modified Picard's method (MPM)  and its convergence analysis.
	

	\noindent Traub \cite{Traub1964} introduced the following two-step iterative scheme for solving the nonlinear operator equation \(F(x)=0\):
	\[
	\begin{cases}
		y_k &= x_k-[F'(x_k)]^{-1}F(x_k),\\
		x_{k+1} &= y_k-[F'(x_k)]^{-1}F(y_k).
	\end{cases}
	\qquad k=0,1,2,\ldots
	\]
	Haghani \cite{Traub2015} employed this scheme to obtain numerical solutions of the AVE (\ref{Equ2}). Motivated by this approach, we extend the scheme to the NGAVE (\ref{Equ1}) and develop a corresponding iterative method for computing its solution.
	
	
	\subsection{Generalized Traub's Method} \label{section_Traub}
	
	
	Consider the following equation
	\begin{equation}
		F(x)=Ax-|Bx|-d.
	\end{equation}
	The classical Traub's method \cite{Traub1964} for solving the nonlinear equation
	\(
	F(x)=0
	\)
	is given by
	\begin{equation}
		\begin{aligned}
			y_k &= x_k-[F'(x_k)]^{-1}F(x_k),\\
			x_{k+1} &= y_k-[F'(x_k)]^{-1}F(y_k),
		\end{aligned}
	\end{equation}
	where \(F'(x_k)\) denotes the generalized Jacobian of \(F\)  at \(x_k\).
	
	\noindent The generalized Jacobian of \(F\) is
	\begin{equation}
		\partial F(x)=A-D(Bx)B.
	\end{equation}
	
	\noindent Hence, the first step of Traub's method becomes
	\begin{equation}
		\begin{aligned}
			y_k
			&=x_k-\left(A-D(Bx_k)B\right)^{-1}
			\left(Ax_k-|Bx_k|-d\right).
		\end{aligned}
	\end{equation}
	
	\noindent Using the identity
	\[
	|Bx_k|=D(Bx_k)Bx_k,
	\]
	we obtain
	\[
	Ax_k-|Bx_k|
	=
	\left(A-D(Bx_k)B\right)x_k.
	\]
	Therefore,
	\begin{equation}
		\begin{aligned}
			y_k
			&=
			x_k-
			\left(A-D(Bx_k)B\right)^{-1}
			\left[
			\left(A-D(Bx_k)B\right)x_k-d
			\right] \\
			&=
			x_k-x_k+
			\left(A-D(Bx_k)B\right)^{-1}d.
		\end{aligned}
	\end{equation}
	
	Thus,
	\begin{equation}
		\boxed{
			y_k=
			\left(A-D(Bx_k)B\right)^{-1}d.
		}
	\end{equation}
	
	Similarly,
	$F(y_k)=Ay_k-|By_k|-d,$
	and
	$|By_k|=D(By_k)By_k.$
	
	Consequently,
	\begin{equation}
		\boxed{
			x_{k+1}
			=
			y_k-
			\left(A-D(Bx_k)B\right)^{-1}
			\left[
			\left(A-D(By_k)B\right)y_k-d
			\right].
		}
	\end{equation}
	
	\noindent Hence, the generalized Traub method for solving the NGAVE (\ref{Equ1}) is given by
	\begin{equation} \label{Traub's Method}
		\boxed{
			\begin{aligned}
				y_k
				&=
				\left(A-D(Bx_k)B\right)^{-1}d,\\
				x_{k+1}
				&=
				y_k-
				\left(A-D(Bx_k)B\right)^{-1}
				\left[
				\left(A-D(By_k)B\right)y_k-d
				\right].
			\end{aligned}
		}
	\end{equation}
	
	
	\begin{figure}[htbp]
		\centering
		\fbox{
			\begin{minipage}{0.95\textwidth}
				\textbf{Algorithm 1: Generalized Traub's Method (GTM)}
				
				\textbf{Input:} Matrices $A,B\in\mathbb{R}^{n\times n}$,
				vector $d\in\mathbb{R}^n$, initial guess $x_0$,
				tolerance $\varepsilon>0$, and maximum iterations $N_{\max}$.
				
				
				\begin{enumerate}
					\item Set $k=0$.
					
					\item Compute
					\[
					y_k=\left(A-D(Bx_k)B\right)^{-1}d.
					\]
					
					and
					\[
					\begin{aligned}
						x_{k+1}=y_k &-\left(A-D(Bx_k)B\right)^{-1}\left[Ay_k-|By_k|-d\right].
					\end{aligned}
					\]
					
					\item If
					\[
					\left\|Ax_{k+1}-|Bx_{k+1}|-d\right\|
					\leq \varepsilon,
					\]
					set $x^*=x_{k+1}$ and stop.
					
					\item Set $k:= k+1$. If $k<N_{\max}$,
					return to Step 2; otherwise, stop and return
					$x^*=x_{k+1}$.
				\end{enumerate}
			\end{minipage}
		}
	\end{figure}

	
	\noindent We now present the convergence analysis of the proposed generalized Traub's method (GTM).
	\begin{lemma} \label{Lemma 01}
		If minimum singular value of matrix $A$ is greater than maximum singular value of matrix $B$, then $(A-DB)$ is invertible for any digonal matrix $D$ whose diagonal whose diagonal entries comes from the set \{0, 1,-1\}.
	\end{lemma}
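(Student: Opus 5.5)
We argue by contradiction, showing that $A-DB$ has trivial kernel. Assume $(A-DB)x=0$ for some $x\neq 0$. Then $Ax=DBx$. Taking Euclidean norms gives
\[
\sigma_{\min}(A)\,\|x\| \le \|Ax\| = \|DBx\| \le \|D\|\,\|Bx\| \le \|Bx\| \le \sigma_{\max}(B)\,\|x\|.
\]
This uses three facts. First, $\|Ax\|\ge\sigma_{\min}(A)\|x\|$, which follows from $\|Ax\|^2=x^TA^TAx\ge\lambda_{\min}(A^TA)\|x\|^2$. Second, $\|D\|\le 1$: $D$ is diagonal with entries in $\{0,1,-1\}$, so $\|D\|=\max_i|D_{ii}|\le 1$, or directly $|(DBx)_i|\le|(Bx)_i|$ componentwise. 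Third, $\|Bx\|\le\sigma_{\max}(B)\|x\|=\|B\|\,\|x\|$.

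Dividing by $\|x\|>0$ yields $\sigma_{\min}(A)\le\sigma_{\max}(B)$. This contradicts the hypothesis $\sigma_{\min}(A)>\sigma_{\max}(B)$. Hence $\ker(A-DB)=\{0\}$, and since $A-DB$ is square, it is invertible.

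An alternative route goes through Theorem~\ref{Result 3}. Note first that $A$ is invertible, since $\sigma_{\min}(A)>\sigma_{\max}(B)\ge 0$. Then write $A-DB=A(I-A^{-1}DB)$ and use
\[
\|A^{-1}DB\|\le\|A^{-1}\|\,\|B\|=\frac{\sigma_{\max}(B)}{\sigma_{\min}(A)}<1.
\]
The Neumann Lemma then gives invertibility of $I-A^{-1}DB$, and hence of $A-DB$.

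There is no real obstacle in either argument. The only points needing care are that $\|D\|\le 1$ covers the zero diagonal entries, and that the singular-value inequalities are the standard Rayleigh-quotient bounds. Both are immediate.
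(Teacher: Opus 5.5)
Your proof is correct, and so is your alternative route. The paper gives no argument of its own here; it simply defers to Lemma~2.1 of \cite{KhaldiNGAVE2024}. Your kernel argument,
\[
\sigma_{\min}(A)\,\|x\|\le\|Ax\|=\|DBx\|\le\|Bx\|\le\sigma_{\max}(B)\,\|x\|,
\]
therefore makes the lemma self-contained. It is the standard argument for the AVE case $B=I$, adapted to a general $B$, and it needs only the Rayleigh-quotient bounds and $\|D\|\le 1$.

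Your second route uses $A-DB=A(I-A^{-1}DB)$ with $\|A^{-1}DB\|\le\|A^{-1}\|\,\|B\|<1$. This is the same factorization the paper itself uses later, in the proof of Theorem~\ref{Main Thm 02}. It also shows that, in the $2$-norm, the lemma's hypothesis is exactly $\|A^{-1}\|\,\|B\|<1$. Consequently conditions (i) and (iii) of Theorem~\ref{Result 1} coincide.

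One small caution applies to that route. Theorem~\ref{Result 3}, as literally stated in the paper, presupposes that $I-B$ is invertible and only supplies the norm bound. You should therefore invoke the standard form of the Neumann lemma rather than that statement. Alternatively, note directly that $(I-M)x=0$ with $\|M\|<1$ forces $\|x\|\le\|M\|\,\|x\|$, hence $x=0$. That observation is really your first argument again.
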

	\begin{proof}
		The proof follows from Lemma 2.1 of \cite{KhaldiNGAVE2024}.
	\end{proof}

	\begin{theorem} \label{Main Thm 01}
		Suppose that $||(A-DB)^{-1}|| < \frac{1}{3 ||B||}$ holds for each digonal matrix $D$ whose diagonal elements belong to $\{0,-1, 1\}$. Then the generalized Traub's method defined in (\ref{Traub's Method}) is converges linearly from any starting point to a solution $x^{*}$ of the NGAVE (\ref{Equ1}).
	\end{theorem}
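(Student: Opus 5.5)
Throughout, write $D_k=D(Bx_k)$ and $M_k=A-D_kB$. The hypothesis $\|M_k^{-1}\|<\frac{1}{3\|B\|}$ holds for every sign pattern, so $M_k$ is invertible for every $k$ and the scheme (\ref{Traub's Method}) is well defined.

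\textbf{Existence and uniqueness of $x^*$.} For every $D$ we have $\|(A-DB)^{-1}\|\,\|B\|<\tfrac13<1$. Taking $D=0$ gives $\|A^{-1}\|\,\|B\|<1$, so condition (iii) of Theorem~\ref{Result 1} yields a unique solution $x^*$ with $Ax^*-|Bx^*|=d$.

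\textbf{Error recursion for each substep.} Everything rests on one identity. Because $d=Ax^*-|Bx^*|$ and $M_kx^*=Ax^*-D_kBx^*$, we get
\[
d=M_kx^*+D_kBx^*-|Bx^*|.
\]
\begin{itemize}
\item First substep: $y_k=M_k^{-1}d$ gives
\[
y_k-x^*=M_k^{-1}\bigl(D_kBx^*-|Bx^*|\bigr)=M_k^{-1}\bigl(|Bx_k|-|Bx^*|-D_kB(x_k-x^*)\bigr),
\]
where I used $D_kBx_k=|Bx_k|$.
\item Second substep: write $F(y_k)=Ay_k-|By_k|-d$. Then
\[
x_{k+1}-x^*=M_k^{-1}\bigl[M_k(y_k-x^*)-F(y_k)\bigr],
\]
and $F(y_k)=A(y_k-x^*)-(|By_k|-|Bx^*|)$. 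Hence
\[
x_{k+1}-x^*=M_k^{-1}\bigl[(|By_k|-|Bx^*|)-D_kB(y_k-x^*)\bigr].
\]
\end{itemize}

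\textbf{Norm bounds.} In both expressions I bound the two pieces separately.
\begin{itemize}
\item The absolute-value difference satisfies $\||Bu|-|Bv|\|\le\|B(u-v)\|\le\|B\|\,\|u-v\|$, since $||a|-|b||\le|a-b|$ componentwise. This is sharper than the factor 2 in Theorem~\ref{Result 2}.
\item The term with $D_k$ satisfies $\|D_kB(u-v)\|\le\|B\|\,\|u-v\|$, because $\|D_k\|\le1$.
\end{itemize}
This gives
\[
\|y_k-x^*\|\le 2\|M_k^{-1}\|\,\|B\|\,\|x_k-x^*\|<\tfrac23\|x_k-x^*\|,
\]
\[
\|x_{k+1}-x^*\|\le 2\|M_k^{-1}\|\,\|B\|\,\|y_k-x^*\|<\tfrac23\|y_k-x^*\|.
\]

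\textbf{Combining.} Chaining the two bounds gives $\|x_{k+1}-x^*\|\le\theta\|x_k-x^*\|$ with a uniform constant $\theta$ below $\tfrac49$. Uniformity holds because there are only finitely many sign matrices $D$, so $\max_D\|(A-DB)^{-1}\|\,\|B\|<\tfrac13$ strictly. Linear convergence from any $x_0$ follows.

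\textbf{Main obstacle.} The hard part is arranging the error terms so that each one carries only a factor $\|B\|$, and not the $2\|B\|$ that Theorem~\ref{Result 2} would give. If one insists on Theorem~\ref{Result 2}, the contraction constant becomes $3\|M_k^{-1}\|\,\|B\|<1$ per substep. This is exactly the stated threshold $\frac{1}{3\|B\|}$. I would present that version to match the hypothesis and remark that the componentwise bound improves it.
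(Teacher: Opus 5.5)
Your proof is essentially the paper's argument: the same two error identities, $y_k-x^*=M_k^{-1}\bigl(|Bx_k|-|Bx^*|-D_kB(x_k-x^*)\bigr)$ and $x_{k+1}-x^*=M_k^{-1}\bigl(|By_k|-|Bx^*|-D_kB(y_k-x^*)\bigr)$ with $M_k=A-D(Bx_k)B$, bounded in norm and chained. Your additions are improvements rather than a different route, and they close points the paper leaves open: you get existence of $x^*$ from $D=0$ and Theorem~\ref{Result 1}(iii), whereas the paper simply assumes a solution; the componentwise bound gives a factor $2$ instead of $3$ per substep; and finitely many sign matrices give a uniform $\theta<1$, which is what turns the paper's conclusion $\|x_{k+1}-x^*\|<\|x_k-x^*\|$ into genuine linear convergence.
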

	\begin{proof}
		By Lemma \ref{Lemma 01}, $(A-DB)^{-1}$ exists, therefore the generalized Traub's method is well defined.	
		
		\noindent Suppose that \(x^{*}\) is a solution of the NGAVE (\ref{Equ1}), then $Ax^{*}-|x^{*}|=d$ or $(A-D(Bx^{*})B)x^{*}=d$.
		
		\noindent Now, subtracting the above equation from the first step of GTM (\ref{Traub's Method}), i.e., $(A-D(Bx_k)B)y_k=d$. So we get \\
		$(A-D(Bx_k)B)y_k - (A-D(Bx^*)B)x^* =0,$ and \\
		$(A-D(Bx_k)B)y_k - (A-D(Bx_k)B)x^* + |Bx^*|-D(Bx_k)Bx^* =0,$ and \\
		$(A-D(Bx_k)B)(y_k-x^*)=D(Bx_k)Bx^*-|Bx^*|,$ and \\
		$(A-D(Bx_k)B)(y_k-x^*)=-D(Bx_k)(Bx_k-Bx^*) - |Bx^*|+|Bx_k|,$ and
		\begin{equation} \label{Proof 01}
			(y_k-x^*)=(A-D(Bx_k)B)^{-1}(|Bx_k| - |Bx^*|-D(Bx_k)(Bx_k-Bx^*)).
		\end{equation}
		Taking norm and using Theorem \ref{Result 2}, we will get
		\begin{equation} \label{Proof 02}
			||y_k-x^*|| \leq 3||(A-D(Bx_k)B)^{-1}|| (||B|| ||x_k-x^*||).
		\end{equation}	
		
		
		\noindent Now, from the second step of GTM (\ref{Traub's Method}), we have	
		\begin{equation} \label{Proof 03}	
			x_{k+1}=y_k-(A-D(Bx_k)B)^{-1}[(A-D(By_k)B)y_k-d].	
		\end{equation}		
		
		\noindent Since $x^*$ is the solution of NGAVE (\ref{Equ1}), then $Ax^* - |Bx^*|-d=0.$\\
		Further, $(A-D(By_k)B)y_k-d= Ay_k - |By_k|-d - 0,$ and\\
		$(A-D(By_k)B)y_k-d= Ay_k - |By_k|-d - Ax^* - |Bx^*|-d,$ and\\
		$(A-D(By_k)B)y_k-d= A(y_k - x^*) -(|By_k| - |Bx^*|).$\\
		Now, substituting this value into Equ. (\ref{Proof 03}), we get	
		\begin{equation} \label{Proof 04}	
			x_{k+1}=y_k-(A-D(Bx_k)B)^{-1}[A(y_k - x^*) -(|By_k| - |Bx^*|)].	
		\end{equation}	
		Now, subtracting $x^*$ in Equ. (\ref{Proof 04}), we get	 
		\begin{equation} \label{Proof 05}	
			x_{k+1}-x^*=y_k-x^*-(A-D(Bx_k)B)^{-1}[A(y_k - x^*) -(|By_k| - |Bx^*|)].	
		\end{equation}
		Now,
		\begin{equation*} \label{Proof 06}	
			x_{k+1}-x^*=(A-D(Bx_k)B)^{-1}(A-D(Bx_k)B)(y_k-x^*)-(A-D(Bx_k)B)^{-1}[A(y_k - x^*) -(|By_k| - |Bx^*|)],
		\end{equation*}
		and
		\begin{equation*} \label{Proof 07}	
			x_{k+1}-x^*=(A-D(Bx_k)B)^{-1} [(A-D(Bx_k)B)(y_k-x^*)-A(y_k - x^*) +|By_k| -|Bx^*|],	
		\end{equation*}
		and
		\begin{equation*} \label{Proof 08}	
			x_{k+1}-x^*=(A-D(Bx_k)B)^{-1} [-D(Bx_k)B(y_k-x^*) +|By_k| -|Bx^*|].	
		\end{equation*}
		Taking norms and using \(\|D(Bx_k)\|\leq1\), together with the inequality of Theorem \ref{Result 2}, we get
		\begin{equation} \label{Proof 09}	
			||x_{k+1}-x^*||  \leq 3||(A-D(Bx_k)B)^{-1}|| (||B||||y_k-x^*||).	
		\end{equation}
		Now combining Equ. (\ref{Proof 02}) and Equ. (\ref{Proof 09}), we get
		\begin{equation} \label{Proof 10}	
			||x_{k+1}-x^*||  \leq 9||(A-D(Bx_k)B)^{-1}||^{2}||B||^{2}(||x_k-x^*||).	
		\end{equation}	
		Since $||(A-DB)^{-1}|| < \frac{1}{3 ||B||}$, we have $9||(A-D(Bx_k)B)^{-1}||^{2}||B||^{2}<1$. Therefore, 
		\begin{equation} \label{Proof 11}	
			||x_{k+1}-x^*||  < ||x_k-x^*||.	
		\end{equation}	
		Hence, the sequence generated by the generalized Traub's method converges linearly to $x^*$ from any initial vector \(x_0\). 
	\end{proof}

	
	\noindent Now, we have the following sufficient condition for the linear convergence of the GTM (\ref{Traub's Method}).
	\begin{theorem} \label{Main Thm 02}
		Suppose that $||A^{-1}|| < \frac{1}{4 ||B||},$ $||B|| \ne 0$ and $D(Bx_k) \ne 0$. Then, for each $d$, the NGAVE (\ref{Equ1}) is uniquely solvable, and the generalized Traub's method defined in (\ref{Traub's Method}) converges linearly to the unique solution $x^*$ from any initial point.
	\end{theorem}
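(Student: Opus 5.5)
The plan is to reduce Theorem \ref{Main Thm 02} to Theorem \ref{Main Thm 01}. I will show that the hypothesis $\|A^{-1}\|<\frac{1}{4\|B\|}$ forces $\|(A-DB)^{-1}\|<\frac{1}{3\|B\|}$ for every diagonal $D$ with entries in $\{0,1,-1\}$. Unique solvability will come directly from Theorem \ref{Result 1}(iii).

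\textbf{Step 1 (unique solvability).} Since $\|A^{-1}\|\|B\|<\frac14<1$, condition (iii) of Theorem \ref{Result 1} holds. Hence the NGAVE (\ref{Equ1}) has a unique solution $x^*$ for every $d$. In particular, $A$ is invertible.

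\textbf{Step 2 (invertibility and a uniform bound for $(A-DB)^{-1}$).} Write
\[
A-DB = A\bigl(I - A^{-1}DB\bigr).
\]
Since $\|D\|\le 1$, we get $\|A^{-1}DB\|\le \|A^{-1}\|\|B\|<\frac14<1$. The Neumann Lemma (Theorem \ref{Result 3}) then gives that $I-A^{-1}DB$ is invertible and
\[
\|(A-DB)^{-1}\| \le \frac{\|A^{-1}\|}{1-\|A^{-1}\|\|B\|}.
\]
This also shows that both steps of (\ref{Traub's Method}) are well defined, independently of Lemma \ref{Lemma 01}.

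\textbf{Step 3 (the key inequality).} Set $t=\|A^{-1}\|\|B\|\in[0,\tfrac14)$; this uses the hypothesis $\|B\|\neq 0$. Then
\[
\|(A-DB)^{-1}\|\,\|B\|\le \frac{t}{1-t}.
\]
The function $t\mapsto t/(1-t)$ is increasing on $[0,1)$ and equals $\tfrac13$ at $t=\tfrac14$. Therefore $t<\tfrac14$ gives $\|(A-DB)^{-1}\|<\frac{1}{3\|B\|}$ for every admissible $D$. This is exactly the hypothesis of Theorem \ref{Main Thm 01}.

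\textbf{Step 4 (conclusion).} Invoking Theorem \ref{Main Thm 01}, or repeating its estimates (\ref{Proof 02})--(\ref{Proof 10}), gives
\[
\|x_{k+1}-x^*\|\le 9\left(\frac{t}{1-t}\right)^2\|x_k-x^*\|.
\]
Here the constant $q=9\bigl(t/(1-t)\bigr)^2<1$ is uniform in $k$ and independent of $D(Bx_k)$. Hence $\|x_k-x^*\|\le q^k\|x_0-x^*\|\to 0$, which is genuine linear convergence. This also repairs the strict-decrease phrasing in the earlier proof. The hypothesis $D(Bx_k)\neq 0$ plays no essential role, since the bound holds for all $D$, including $D=0$.

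The main obstacle is Step 3: the threshold $\frac14$ is precisely what makes the Neumann bound land at $\frac13$. Care is needed to keep the inequality strict and the contraction factor uniform rather than merely $<1$ at each step.
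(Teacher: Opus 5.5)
Your proposal is correct and follows essentially the same route as the paper. Both arguments get unique solvability from Theorem~\ref{Result 1}(iii), apply the Neumann lemma to $I-A^{-1}DB$ to obtain $\|(A-DB)^{-1}\|\le \|A^{-1}\|/(1-\|A^{-1}\|\|B\|)<\frac{1}{3\|B\|}$, and then invoke Theorem~\ref{Main Thm 01}. Your version is slightly cleaner: you write inequalities where the paper writes equalities, you get invertibility of $A-DB$ directly from the Neumann lemma, and you make the contraction factor $9\bigl(t/(1-t)\bigr)^2<1$ uniform in $k$.
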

	\begin{proof}
		The existence of a unique solution of the NGAVE (\ref{Equ1}) follows from the Theorem \ref{Result 1} and the existence of $(A-DB)^{-1}$ are confirmed by Lemma 2.1 of \cite{KhaldiNGAVE2024}.	\\
		Now, we have $(A-DB)^{-1}=(I-A^{-1}DB)^{-1}A^{-1}$, and taking norm, we get\\
		\begin{equation} \label{Proof 001}
			||(A-DB)^{-1}||=||(I-A^{-1}DB)^{-1}||||A^{-1}||.
		\end{equation}
		Since $||A^{-1}DB||= ||A^{-1}||||B||<1,$ then by Theorem \ref{Result 3}, we have		
		\begin{equation} \label{Proof 002}	
			||I-A^{-1}DB||  \leq \frac{1}{1-||A^{-1}DB||} \leq \frac{1}{1-||A^{-1}||||B||}.	
		\end{equation}
		From Equ. (\ref{Proof 001}) and Equ. (\ref{Proof 002}), we have
		\begin{equation} \label{Proof 003}	
			||(A-DB)^{-1}|| \leq \frac{||A^{-1}||}{1-||A^{-1}||||B||}.	
		\end{equation}	
		So,	
		\begin{equation} \label{Proof 004}	
			||(A-DB)^{-1}|| \leq \frac{||A^{-1}||}{1-||A^{-1}||||B||} < \frac{\frac{1}{4||B||}}{1- \frac{1}{4}} < \frac{1}{3||B||}.
		\end{equation}	
	\end{proof}
	
	\noindent Therefore, by Theorem \ref{Main Thm 01}, the sequence generated by the GTM (\ref{Traub's Method}) converges linearly to the unique solution of the NGAVE (\ref{Equ1}) from any initial point.

	\begin{remark} \label{Remark 2}
		The conditions of Theorem \ref{Main Thm 01} and Theorem \ref{Main Thm 02} are also applicable to the generalized Newton method, as defined in \cite{KhaldiNGAVE2024} for finding the solution of NGAVE (\ref{Equ1}). The proof is similar to that of Theorem \ref{Main Thm 01} and Theorem \ref{Main Thm 02}, respectively.
	\end{remark}
	
	

	

\subsection{Modified Picard's Method} \label{section_Picard}
	
	In this section, we dicuss the modified Picard's method for computing the solution of the NGAVE. Khaldi et al. \cite{KhaldiNGAVE2024} proposed the following Picard iteration method for solving the NGAVE (\ref{Equ1}).
	$$x_{i+1}=A^{-1}|Bx_{i}|+A^{-1}d, ~~ i= 0,1,2, \cdots.$$

	
	\noindent Now, we will define the following Modified Picard's Method (MPM):
	\[
	\begin{cases}
		x_{i+\frac{1}{2}}=A^{-1}|Bx_{i}|+A^{-1}d,\\[2mm]
		x_{i+1}=A^{-1}|Bx_{i+\frac12}|+A^{-1}d.
	\end{cases}
	\qquad i=0,1,2,\cdots.
	\]
	
	
	\begin{center}
		\fbox{
			\begin{minipage}{0.99\textwidth}
				\textbf{Algorithm 2: Modified Picard's Method (MPM)}
				
				\textbf{Input:} $A,B\in\mathbb{R}^{n\times n}$, $d\in\mathbb{R}^n$,
				initial vector $x_0$, tolerance $\varepsilon>0$, and maximum
				iterations $N_{\max}$.
				
				
				\begin{enumerate}
					\item Set $k=0$.
					\item Compute
					\[
					x_{k+\frac12}=A^{-1}(|Bx_k|+d).
					\]
					and
					\[
					x_{k+1}=A^{-1}(|Bx_{k+\frac12}|+d).
					\]
					\item If
					\[
					\|Ax_{k+1}-|Bx_{k+1}|-d\|\leq\varepsilon,
					\]
					stop and return $x_{k+1}$.
					\item Set $k := k+1$. If $k<N_{\max}$,
					return to Step 2; otherwise stop.
				\end{enumerate}
			\end{minipage}
		}
	\end{center}
	
	\noindent We now establish the convergence properties of the proposed modified Picard's method (MPM).
	\begin{theorem} \label{Theorem 1}
		Suppose that the NGAVE (\ref{Equ1}) is solvable, where $A,B\in\mathbb{R}^{n\times n}$ and $A$ is nonsingular.
		
		If
		\[
		\rho\!\left((|A^{-1}||B|)^2\right)<1,
		\]
		then, for every initial vector
		$x^{(0)}\in\mathbb{R}^n$, the sequence
		$\{x^i\}$ generated by the modified Picard's iteration converges to the unique solution
		$x^*$ of the NGAVE (\ref{Equ1}).
	\end{theorem}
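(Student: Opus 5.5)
Fix a solution $x^*$ of the NGAVE (\ref{Equ1}); it exists by hypothesis. Since $A$ is nonsingular, $x^*$ satisfies the fixed-point relation $x^*=A^{-1}|Bx^*|+A^{-1}d$. The plan is to compare each half-step of the MPM with this relation \emph{componentwise}, not in norms, so that the spectral-radius hypothesis on the nonnegative matrix $M:=|A^{-1}||B|$ can be used.

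Subtracting the fixed-point relation from the first half-step gives $x_{i+\frac12}-x^*=A^{-1}(|Bx_i|-|Bx^*|)$. The componentwise triangle inequality $\big||u|-|v|\big|\le |u-v|$ for vectors $u,v$ is sharper than Theorem \ref{Result 2}, which only gives a factor 2 in norm. Together with $|Bz|\le |B||z|$ it yields
\[
|x_{i+\frac12}-x^*|\le |A^{-1}|\,\big|B(x_i-x^*)\big|\le M\,|x_i-x^*|,
\]
where all inequalities are entrywise. The same argument applied to the second half-step gives $|x_{i+1}-x^*|\le M|x_{i+\frac12}-x^*|$. Since $M\ge 0$, multiplication by $M$ preserves entrywise inequalities, so
\[
|x_{i+1}-x^*|\le M^2|x_i-x^*|.
\]
Iterating, we get $|x_i-x^*|\le (M^2)^i|x_0-x^*|$.

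Next, $\rho(M^2)<1$ implies $(M^2)^i\to 0$. Hence $|x_i-x^*|\to 0$ entrywise, so $x_i\to x^*$ for every starting vector $x_0$. The half-iterates converge as well, since $|x_{i+\frac12}-x^*|\le M|x_i-x^*|$. If a linear rate is wanted, pick $\varepsilon>0$ with $\rho(M^2)+\varepsilon<1$ and an induced norm in which $\|M^2\|<\rho(M^2)+\varepsilon$; this gives linear convergence of the full steps. For a nonnegative matrix one can instead use a weighted max-norm built from a positive vector $v$ with $M^2v\le(\rho(M^2)+\varepsilon)v$.

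Finally, uniqueness follows from the same estimate. If $x^*$ and $\bar x$ are two solutions, both are fixed points of $x\mapsto A^{-1}|Bx|+A^{-1}d$. Applying the estimate twice gives $|x^*-\bar x|\le M^2|x^*-\bar x|$, hence $|x^*-\bar x|\le (M^2)^i|x^*-\bar x|\to 0$, so $\bar x=x^*$. Equivalently, any sequence generated by the MPM converges to every solution, which forces the solution to be unique.

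The main obstacle is keeping the whole argument entrywise. Passing to norms through Theorem \ref{Result 2} would introduce the factor 2 and would not connect to $\rho(M^2)$. The key facts are the monotonicity $0\le u\le v\Rightarrow Mu\le Mv$ for $M\ge 0$ and the standard equivalence $\rho(C)<1\iff C^i\to 0$. Both are standard, and I would cite them rather than reprove them.
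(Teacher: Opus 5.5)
Your proof is correct and follows the same route as the paper: you use the entrywise bounds $|x_{i+\frac12}-x^*|\le |A^{-1}||B|\,|x_i-x^*|$ and $|x_{i+1}-x^*|\le (|A^{-1}||B|)^2|x_i-x^*|$ together with $\rho\bigl((|A^{-1}||B|)^2\bigr)<1$, and you also spell out the two steps the paper leaves implicit, namely $(M^2)^i\to 0$ and the uniqueness of $x^*$. In your optional linear-rate remark, the weighted max-norm is the right choice, because passing from an entrywise bound to a norm bound needs a monotone norm, and a generic induced norm with $\|M^2\|<\rho(M^2)+\varepsilon$ need not be monotone.
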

	\begin{proof}
		Suppose that \(x^{*}\) is a solution of the NGAVE (\ref{Equ1}).
		
		The modified Picard iteration is
		\[
		\begin{cases}
			x_{i+\frac12}
			=
			A^{-1}(|Bx_i|+d),\\[2mm]
			x_{i+1}
			=
			A^{-1}(|Bx_{i+\frac12}|+d).
		\end{cases}
		\]
		
		Now the fixed-point equation gives
		\begin{equation}
			x_{i+\frac12}-x^*
			=
			A^{-1}
			(
			|Bx_i|-|Bx^*|
			),
		\end{equation}
		and
		\begin{equation}
			x_{i+1}-x^*
			=
			A^{-1}
			(
			|Bx_{i+\frac12}|-|Bx^*|
			).
		\end{equation}
		
		Taking component-wise absolute values, we get
		\begin{equation} \label{001}
			|x_{i+\frac12}-x^*|
			=
			|A^{-1}
			(
			|Bx_i|-|Bx^*|
			)| 
			\le
			|A^{-1}|
			\,
			\bigl||Bx_i|-|Bx^*|\bigr|,
		\end{equation}
		and
		\begin{equation} \label{002}
			|x_{i+1}-x^*|
			=
			|A^{-1}
			(
			|Bx_{i+\frac12}|-|Bx^*|
			)|
			\le
			|A^{-1}|
			\,
			\bigl||Bx_{{i+\frac12}}|-|Bx^*|\bigr|.
		\end{equation}

		From Equ. (\ref{001}) and Equ. (\ref{002}), we get
		\[
		|x_{i+\frac12}-x^*|
		\le
		|A^{-1}|
		|B(x_i-x^*)|.
		\]
		
		Since
		\[
		|B(x_i-x^*)|
		\le
		|B||x_i-x^*|,
		\]
		it follows that
		\[
		|x_{i+\frac12}-x^*|
		\le
		|A^{-1}||B|
		|x_i-x^*|.
		\]
		
		Similarly,
		\[
		|x_{i+1}-x^*|
		\le
		|A^{-1}||B|
		|x_{i+\frac12}-x^*|.
		\]
		
		Combining the above inequalities, we get
		\[
		|x_{i+1}-x^*|
		\le
		(|A^{-1}||B|)^2
		|x_i-x^*|.
		\]
		
		Since,
		\[
		\rho\!\left((|A^{-1}||B|)^2\right)<1,
		\]
		then the sequence of the MPM converges to the unique solution $x^{*}$.

		
	\end{proof}
	
	\section{Numerical Results} \label{section_Numerical}
	This section presents numerical experiments to evaluate the effectiveness of the proposed methods and compare their computational performance with existing approaches. All numerical experiments were conducted in \textbf{MATLAB R2016a} on a computer equipped with an Intel(R) Core(TM) i5-10210U processor running at 1.60 GHz, 8 GB of RAM, and the Windows 11 operating system.
	The stopping criterion is based on the residual
	\[
	\mathrm{RES}=\|Ax^{(k)}-|Bx^{(k)}|-d\|,
	\]
	and the iteration is terminated whenever
	\[
	\mathrm{RES}\leq 10^{-6}
	\]
	or the number of iterations exceeds $1000$.
	
	
	The zero vector is used as the initial starting point $x_{0}$. In the numerical results tables, ``Iter", ``CPU" and ``Res" represent the number of iterations, the total computational time, and the residual error, respectively.
	We compare our methods with Picard's method \cite{KhaldiNGAVE2024}, generalized Newton's method \cite{KhaldiNGAVE2024}
	and SOR-like method \cite{ShiLiangWuNGAVE_SOR2023}.
	
	\begin{example} \label{Exam_1}
		Let us consider the following matrices $A,B\in \mathbb{R}^{7\times7}$ and vector $d\in\mathbb{R}^{7}$
		\begin{equation*}
			A=	
			\begin{bmatrix}
				5&1&0&0&0&0&0\\
				1&5&1&0&0&0&0\\
				0&1&5&1&0&0&0\\
				0&0&1&5&1&0&0\\
				0&0&0&1&5&1&0\\
				0&0&0&0&1&5&1\\
				0&0&0&0&0&1&5
			\end{bmatrix},
			~~~ B=
			\begin{bmatrix}
				\frac12 & \frac14 & 0 & 0 & 0 & 0 & 0\\
				\frac14 & \frac12 & \frac14 & 0 & 0 & 0 & 0\\
				0 & \frac14 & \frac12 & \frac14 & 0 & 0 & 0\\
				0 & 0 & \frac14 & \frac12 & \frac14 & 0 & 0\\
				0 & 0 & 0 & \frac14 & \frac12 & \frac14 & 0\\
				0 & 0 & 0 & 0 & \frac14 & \frac12 & \frac14\\
				0 & 0 & 0 & 0 & 0 & \frac14 & \frac12
			\end{bmatrix},
			~~~ and ~~~ d= 
			\begin{bmatrix}
				1\\
				1\\
				1\\
				1\\
				1\\
				1\\
				1
			\end{bmatrix}.
		\end{equation*}
	\end{example}
	Since $\sigma_{min}(A)=3.1522 > 0.9619 =\sigma_{max}(B).$ Then above NGAVE has a unique solution for all $d \in R^{7}.$
	The exact solution of the corresponding NGAVE is
	
	\[
	x^{*}
	=
	\begin{bmatrix}
		0.1953 &
		0.1618 &
		0.1675 &
		0.1664 &
		0.1675 &
		0.1618 &
		0.1953
	\end{bmatrix}^{T}.
	\]

	The numerical performances of Picard's iteration, the generalized Newton method, SOR-like method, modified Picard's method and generalized Traub's method are reported in Table~\ref{tab:comparison51}.
	As shown in Table~\ref{tab:comparison51}, the proposed methods remain computationally efficient.
	Although both GTM and the GNM converge in only two iterations, GTM requires the least CPU time (0.000396 s) while achieving a residual of the order of $10^{-16}$, indicating excellent numerical accuracy. Moreover, the modified Picard's Method reduces the iteration count from 8 to 4 compared with the classical Picard's method, confirming that the proposed modification significantly accelerates convergence. These results clearly show that the proposed methods are more efficient and competitive than the existing iterative schemes for solving the NGAVE.

	\begin{table}[htbp]
		\centering
		\caption{Numerical results for Example~\ref{Exam_1}.}
		\label{tab:comparison51}
		\begin{tabular}{lccc}
			\hline
			Methods & Iter & CPU Time (s) & Res \\
			\hline
			Picard's Method \cite{KhaldiNGAVE2024} & 8 & 0.001537 & $3.82497\times10^{-7}$ \\
			GNM \cite{KhaldiNGAVE2024} & 2 & 0.001081 & $4.00296\times10^{-16}$ \\
			SOR-like Method ($\rho=0.1405$, $\omega_0=1.0378$) \cite{ShiLiangWuNGAVE_SOR2023} & 6 &  0.003263 & $8.90681\times10^{-8}$ \\
			Proposed Modified Picard's Method & 4 & 0.001785 & $1.74717\times10^{-7}$ \\
			Proposed GTM & 2 & 0.000396 & $2.22045\times10^{-16}$ \\
			\hline
		\end{tabular}
	\end{table}
	
	
	
	\begin{example} \label{Exam_3} \cite{KhaldiNGAVE2024}
		The hydrodynamic equilibrium problem can be formulated as the following system of non-differentiable algebraic equations:	
		$$Cx+max(0,Bx)=c,$$
		where $B, C \in \mathbb{R}^{n \times n},$ $c \in \mathbb{R}^{n},$ and $max(0,Bx)$ is understood component-wise.\\
		Using $max(0,z)=\frac{1}{2}(z+|z|)$, we get the following type of NGAVE 
		$$Cx+\frac{1}{2}(Bx+|Bx|)=c,$$
		and $$(2C+B)x+|Bx|=2c,$$
		where $A=-(2C+B),$ $d=-2c.$
	\end{example}
	\noindent Now consider the following hydrodynamic equation where $B,$ $C$ and $c$ are given by
	\begin{equation*}
		B= tridiag(0.05,0.2,0.05) =
		\begin{bmatrix}
			0.2 & 0.05 & 0 & \cdots & 0 & 0\\
			0.05 & 0.2 & 0.05 & \cdots & 0 & 0\\
			0 & 0.05 & 0.2 & \cdots & 0 & 0\\
			\vdots & \vdots & \vdots & \ddots & \vdots & \vdots\\
			0 & 0 & 0 & \cdots & 0.2 & 0.05\\
			0 & 0 & 0 & \cdots & 0.05 & 0.2
		\end{bmatrix},
	\end{equation*}
	\begin{equation*}
		C = tridiag(-2.5,-25.5,-2.5) =
		\begin{bmatrix}
			-25.5 & -2.5 & 0 & \cdots & 0 & 0\\
			-2.5 & -25.5 & -2.5 & \cdots & 0 & 0\\
			0 & -2.5 & -25.5 & \cdots & 0 & 0\\
			\vdots & \vdots & \vdots & \ddots & \vdots & \vdots\\
			0 & 0 & 0 & \cdots & -25.5 & -2.5\\
			0 & 0 & 0 & \cdots & -2.5 & -25.5
		\end{bmatrix},
		~~~ and
		~~~ c=
		\begin{bmatrix}
			-1\\
			1\\
			-1\\
			1\\
			\vdots\\
			-1 \\
			1
		\end{bmatrix}.
	\end{equation*}
	\noindent So, $A=tridiag(4.95,50.8,4.95)$ and $d=[2,-2,\hdots,2,-2]^{T}.$\\
	Table~\ref{tab:Example423} shows the computational results for different problem dimensions.
	For all tested dimensions, the modified Picard's method and generalized Traub's method converge in just two iterations. While GTM regularly reaches residuals on the order of $10^{-15}$, indicating good numerical accuracy, MPM retains competitive CPU time as the issue dimension grows. As a result, GTM shows quick convergence with superior numerical precision, while MPM shows an effective balance between iteration count and computational cost.

	\begin{table}[htbp]
		\centering
		\caption{Numerical results for Example~\ref{Exam_3}.}
		\label{tab:Example423}
		\renewcommand{\arraystretch}{1.15}
		\begin{tabular}{ccccccc}
			\hline
			$n$ & Methods & Iter & CPU (s) & Res \\
			\hline
			\hline
			\multirow{5}{*}{100}
			& Picard's Method \cite{KhaldiNGAVE2024} &  3 & 0.001212 & $5.996289\times10^{-7}$ \\
			& GNM \cite{KhaldiNGAVE2024} & 2 & 0.000596 & $3.447059\times10^{-15}$ \\
			& SOR-like ($\rho=0.003937$, $\omega_0=1.000986$) \cite{ShiLiangWuNGAVE_SOR2023} & 4 & 0.000746 & $8.017673\times10^{-9}$ \\
			&Proposed Modified Picard's Method & 2 & 0.000498 &$2.950723\times10^{-9}$ \\
			&Proposed GTM & 2 & 0.000794 & $2.44249\times10^{-15}$ \\
			\hline
			
			\multirow{5}{*}{500}
			& Picard's Method \cite{KhaldiNGAVE2024} & 4 & 0.012378 & $6.543908\times10^{-9}$ \\
			& GNM \cite{KhaldiNGAVE2024} &  2 & 0.012127 & $7.822149\times10^{-15}$ \\
			& SOR-like ($\rho=0.003937$, $\omega_0=1.000986$) \cite{ShiLiangWuNGAVE_SOR2023} & 4 & 0.006998 & $4.98145\times10^{-7}$ \\
			&Proposed Modified Picard's Method & 2 & 0.005781 &$6.543908\times10^{-9}$ \\
			&Proposed GTM & 2 & 0.023428 & $5.068262\times10^{-15}$ \\
			\hline
			
			\multirow{5}{*}{1000}
			& Picard's Method \cite{KhaldiNGAVE2024} & 4 & 0.050001 & $9.244874\times10^{-9}$ \\
			& GNM \cite{KhaldiNGAVE2024} & 2 & 0.094929 & $1.109112\times10^{-14}$ \\
			& SOR-like ($\rho=0.003937$, $\omega_0=1.000986$) \cite{ShiLiangWuNGAVE_SOR2023} & 4 & 0.039213 & $2.514193\times10^{-8}$ \\
			&Proposed Modified Picard's Method & 2 & 0.037881 & $9.244874\times10^{-9}$ \\
			&Proposed GTM & 2 & 0.189858 & $7.1088996\times10^{-15}$ \\
			\hline
			
			\multirow{5}{*}{2000}
			& Picard's Method \cite{KhaldiNGAVE2024} & 4 & 0.300307 & $1.3067426\times10^{-8}$ \\
			& GNM \cite{KhaldiNGAVE2024} & 2 & 0.564395 & $1.569307\times10^{-14}$ \\
			& SOR-like ($\rho=0.003937$, $\omega_0=1.000986$) \cite{ShiLiangWuNGAVE_SOR2023} & 4 & 0.287949 & $3.553931\times10^{-8}$ \\
			&Proposed Modified Picard's Method & 2 & 0.264036 & $1.306743\times10^{-8}$ \\
			&Proposed GTM & 2 & 1.069536 & $9.992007\times10^{-15}$ \\
			\hline
		\end{tabular}
	\end{table}
	

	\begin{example} \label{Exam_2} \cite{ShiLiangWuNGAVE_SOR2023}
		Consider the following matrices $A,B\in\mathbb{R}^{n\times n}$ and $d\in\mathbb{R}^{n}$, as follows
		\begin{equation*}
			A=
			\begin{bmatrix}
				4 & -1 & 0 & \cdots & 0 & 0\\
				-1 & 4 & -1 & \cdots & 0 & 0\\
				0 & -1 & 4 & \cdots & 0 & 0\\
				\vdots & \vdots & \vdots & \ddots & \vdots & \vdots\\
				0 & 0 & 0 & \cdots & 4 & -1\\
				0 & 0 & 0 & \cdots & -1 & 4
			\end{bmatrix},
			~~~B=
			\begin{bmatrix}
				\frac12 & \frac14 & 0 & \cdots & 0 & 0\\
				\frac14 & \frac12 & \frac14 & \cdots & 0 & 0\\
				0 & \frac14 & \frac12 & \cdots & 0 & 0\\
				\vdots & \vdots & \vdots & \ddots & \vdots & \vdots\\
				0 & 0 & 0 & \cdots & \frac12 & \frac14\\
				0 & 0 & 0 & \cdots & \frac14 & \frac12
			\end{bmatrix},
			~~~ and
			~~~ d=
			\begin{bmatrix}
				1\\
				1\\
				1\\
				\vdots\\
				1 \\
				1
			\end{bmatrix}.
		\end{equation*}
	\end{example}
	
	\noindent The numerical results presented in Table~\ref{tab:Example42} shows that the proposed generalized Traub's method (GTM) consistently converges in only two iterations for all tested problem sizes, while maintaining residuals of the order of $10^{-15}$. The modified Picard's method also exhibits stable convergence, requiring only 10 iterations regardless of the matrix dimension, which is a significant improvement over the classical Picard's method and the SOR-like method. 
	The SOR-like method requires approximately 24–25 iterations for all problem sizes, whereas the modified Picard's method converges in only 10 iterations, reducing the iteration count by more than 50 \%. Moreover, the modified Picard's method consistently achieves lower CPU times than the SOR-like method as the matrix dimension increases, while both methods satisfy the stopping criterion.
	These results confirm the robustness and effectiveness of the proposed iterative schemes for solving large-scale NGAVE problems. 
	
	
	\begin{table}[htbp]
		\centering
		\caption{Numerical results for Example~\ref{Exam_2}.}
		\label{tab:Example42}
		\renewcommand{\arraystretch}{1.15}
		\begin{tabular}{ccccccc}
			\hline
			$n$ & Methods & Iter & CPU (s) & Res \\
			\hline
			
			\hline
			\multirow{5}{*}{1000}
			& Picard's Method \cite{KhaldiNGAVE2024} & 25 & 0.324832 & $9.36366\times10^{-7}$ \\
			& GNM \cite{KhaldiNGAVE2024} & 2 & 0.110854 & $8.6062\times10^{-15}$ \\
			& SOR-like ($\rho=0.0999$, $\omega_0=1.0263$) \cite{ShiLiangWuNGAVE_SOR2023} & 24 & 0.262963 & $4.98145\times10^{-7}$ \\
			&Proposed Modified Picard's Method & 13 & 0.298661 &$4.68118\times10^{-7}$ \\
			&Proposed GTM & 2 & 0.201835 & $2.26714\times10^{-15}$ \\
			\hline
			
			\multirow{5}{*}{2000}
			& Picard's Method \cite{KhaldiNGAVE2024} & 26 & 1.745393 & $6.64213\times10^{-7}$ \\
			& GNM \cite{KhaldiNGAVE2024} & 2 & 0.579081 & $1.16489\times10^{-14}$ \\
			& SOR-like ($\rho=0.0999$, $\omega_0=1.0263$) \cite{ShiLiangWuNGAVE_SOR2023} & 24 & 1.649509 & $7.068567\times10^{-7}$ \\
			&Proposed Modified Picard's Method & 13 & 1.733036 & $6.64213\times10^{-7}$ \\
			&Proposed GTM & 2 & 1.118929 & $2.26714\times10^{-15}$ \\
			\hline
			
			\multirow{5}{*}{3000}
			& Picard's Method \cite{KhaldiNGAVE2024} & 26 & 4.91361 & $8.14386\times10^{-7}$ \\
			& GNM \cite{KhaldiNGAVE2024} & 2 & 1.942717 & $1.40473\times10^{-14}$ \\
			& SOR-like ($\rho=0.0999$, $\omega_0=1.0263$) \cite{ShiLiangWuNGAVE_SOR2023} & 24 & 5.307157 & $8.66686\times10^{-7}$ \\
			&Proposed Modified Picard's Method & 13 & 4.857706 & $8.14386\times10^{-7}$ \\
			&Proposed GTM & 2 & 3.189991 & $2.26714\times10^{-15}$ \\
			\hline
			
			\multirow{5}{*}{4000}
			& Picard's Method \cite{KhaldiNGAVE2024} & 26 & 10.39578 & $9.40887\times10^{-7}$ \\
			& GNM \cite{KhaldiNGAVE2024} & 2 & 3.930474 & $1.60921\times10^{-14}$ \\
			& SOR-like ($\rho=0.0999$, $\omega_0=1.0263$) \cite{ShiLiangWuNGAVE_SOR2023} & 25 & 11.873479 & $4.73182\times10^{-7}$ \\
			&Proposed Modified Picard's Method & 13 & 10.972403 & $9.40887\times10^{-7}$ \\
			&Proposed GTM & 2 & 6.960577 & $2.26714\times10^{-15}$ \\
			\hline
			
			\multirow{5}{*}{5000}
			& Picard's Method \cite{KhaldiNGAVE2024} & 27 & 20.112418 & $5.26131\times10^{-7}$ \\
			& GNM \cite{KhaldiNGAVE2024} & 2 & 7.451955 & $1.79049\times10^{-14}$ \\
			& SOR-like ($\rho=0.0999$, $\omega_0=1.0263$) \cite{ShiLiangWuNGAVE_SOR2023} & 25 & 21.36423 & $5.29214\times10^{-7}$ \\
			&Proposed Modified Picard's Method & 14 & 20.349476 & $2.63058\times10^{-7}$ \\
			&Proposed GTM & 2 & 12.629602 & $2.26714\times10^{-15}$ \\
			\hline
		\end{tabular}
	\end{table}
	
	

	
	

	\section{Conclusions} \label{section_Conclusions}
	In this paper, we proposed two iterative methods, namely the modified Picard's method (MPM) and the generalized Traub's method (GTM), for solving the NGAVE. Under suitable assumptions, we established the well-definedness and convergence of the proposed methods. Numerical comparisons demonstrate that the proposed methods are computationally efficient and robust.
	The modified Picard’s method consistently reduces the number of iterations compared with the classical Picard method. 
	Whereas, the generalized Traub’s method consistently converges in two iterations across all three examples and produces residuals close to machine precision, demonstrating its rapid convergence and good numerical accuracy.
	

\paragraph{Acknowledgments.} 
The author, Shubham Kumar, gratefully acknowledges the financial support provided by the ABV–Indian Institute of Information Technology and Management, Gwalior, under the Institute Post-Doctoral Fellowship (IPDF), Application No. IPDF-Jan-2026-ES-02.



\bibliographystyle{abbrv}
\bibliography{NGAVE_ref}

\end{document}